\newcommand{\tr}[1]{\textrm{#1}}
\newcommand{\mr}[1]{\mathrm{#1}}
\newcommand{\tnr}[1]{{\textnormal{#1}}}
\newcommand{\mf}[1]{\mathsf{#1}}
\newcommand{\ms}[1]{\mathds{#1}}
\newcommand{\ov}[1]{\overline{#1}}
\newcommand{\bi}{\boldsymbol{i}}
\newcommand{\bx}{\boldsymbol{x}}
\newcommand{\btheta}{\boldsymbol{\theta}}
\newcommand{\figref}[1]{Fig.~\ref{#1}}
\newcommand{\secref}[1]{Sec.~\ref{#1}}
\newcommand{\tabref}[1]{Table~\ref{#1}}
\newcommand{\ie}{i.e.,~} 		%note the comma and the space (~)
\newcommand{\eg}{e.g.,~}	%note the comma and the space (~)
\newcommand{\argmin}{\mathop{\mr{argmin}}}
\newcommand{\set}[1]{\{#1\}}
\newcommand{\SET}[1]{\left\{#1\right\}}
\newcommand{\cd}{\cdot}
\newcommand{\ld}{\ldots}
\newcommand{\e}{\mr{e}}
\newcommand{\PR}[1]{\Pr\SET{#1}}       	% Probability
\newcommand{\IND}[1]{\ms{I}\big[{#1}\big]}   	% Indicator function
\newcommand{\Ex}{\ms{E}}     			% Expectation (AA).
\newcommand{\T}{^{\mr{T}}}            		% transpose
\newcommand{\dd}{\,\mr{d}}             		% LS: differentiation operator (\, added to make it nicer)
\newcommand{\mfA}{\mf{A}}
\newcommand{\mfD}{\mf{D}}
\newcommand{\mfH}{\mf{H}}
\newcommand{\mfT}{\mf{T}}
\newcommand{\mfW}{\mf{W}}
\newcommand{\PhiW}{\Phi_{\mfW}} % probability of win 
\newcommand{\PhiT}{\Phi_{\mfT}} % probability of tie
\newcommand{\PhiH}{\Phi_{\mfH}} % probability of win 
\newcommand{\PhiA}{\Phi_{\mfA}} % probability of loosing
\newcommand{\PhiD}{\Phi_{\mfD}} % probability of tie
\pgfplotsset{compat=1.12}
\newacronym[\glsshortpluralkey=PDFs,\glslongpluralkey=probability density functions]{pdf}{PDF}{probability density function}
\newacronym[\glsshortpluralkey=CDFs,\glslongpluralkey=cumulative density functions]{cdf}{CDF}{cumulative density function}
\newacronym[\glsshortpluralkey=CCDFs,\glslongpluralkey=complementary cumulative density functions]{ccdf}{CDF}{complementary cumulative density function}
\newacronym[\glsshortpluralkey=PMFs,\glslongpluralkey=probability mass functions]{pmf}{PMF}{probability mass function}
\newacronym[]{lhs}{l.h.s.}{left-hand side}
\newacronym[]{rhs}{r.h.s.}{right-hand side} 
\newacronym[]{bicm}{BICM}{bit-interleaved coded modulation}
\newacronym[]{bicmid}{BICM-ID}{BICM with iterative demapping}
\newacronym[]{cm}{CM}{coded modulation}
\newacronym[]{tcm}{TCM}{trellis-coded modulation}
\newacronym[]{mlc}{MLC}{multi-level coding}
\newacronym[]{pam}{PAM}{pulse amplitude modulation}
\newacronym[]{bpsk}{BPSK}{binary phase shift keying}
\newacronym[]{qam}{QAM}{quadrature amplitude modulation}
\newacronym[]{16qam}{16-QAM}{16-points quadrature amplitude modulation}
\newacronym[]{psk}{PSK}{phase shift keying}
\newacronym[\glsshortpluralkey=LLRs,\glslongpluralkey=logarithmic likelihood ratios]{llr}{LLR}{logarithmic likelihood ratio}
\newacronym[]{oc}{OC}{operating characteristic}
\newacronym[]{map}{MAP}{maximum a posteriori}
\newacronym[]{ml}{ML}{maximum likelihood}
\newacronym[]{ep}{EP}{expectation propagation}
\newacronym[\glsshortpluralkey=MIs,\glslongpluralkey=mutual informations]{mi}{MI}{mutual information}
\newacronym[\glsshortpluralkey=GMIs,\glslongpluralkey=generalized mutual informations]{gmi}{GMI}{generalized mutual information}
\newacronym[]{eesm}{EESM}{exponential effective-SNR-mapping}
\newacronym[]{bicm-gmi}{BICM-GMI}{BICM generalized mutual information}
\newacronym[]{awgn}{AWGN}{additive white Gaussian noise}
\newacronym[]{bsc}{BSC}{binary symetric channel}
\newacronym[]{amc}{AMC}{adaptive modulation and coding}
\newacronym[]{csi}{CSI}{channel state information}
\newacronym[]{cqi}{CQI}{channel quality indicator}
\newacronym[]{sp}{SP}{set-partitioning}
\newacronym[]{gsm}{GSM}{global system for mobile communications}
\newacronym[]{edge}{EDGE}{enhanced data rates for GSM evolution}
\newacronym[]{3gpp}{3GPP}{3rd generation partnership project}
\newacronym[]{umts}{UMTS}{Universal Mobile Telecommunication System}
\newacronym[]{lte}{LTE}{Long Term Evolution}
\newacronym[]{dvb}{DVB}{digital video broadcasting}
\newacronym[]{fdd}{FDD}{Frequency Division Duplexing}
\newacronym[\glsshortpluralkey=CCs,\glslongpluralkey=convolutional codes]{cc}{CC}{convolutional code}
\newacronym[\glsshortpluralkey=PCCCs,\glslongpluralkey=parallel concatenated convolutional codes]{pccc}{PCCC}{parallel concatenated convolutional code}
\newacronym[\glsshortpluralkey=TCs,\glslongpluralkey=turbo codes]{tc}{TC}{turbo code}
\newacronym{ldpc}{LDPC}{low-density parity-check}
\newacronym[]{ofdm}{OFDM}{orthogonal frequency-division multiplexing}
\newacronym[]{bep}{BEP}{bit-error probability}
\newacronym[]{wep}{WEP}{word-error probability}
\newacronym[]{sep}{SEP}{symbol-error probability}
\newacronym[]{pep}{PEP}{pairwise-error probability}
\newacronym[]{ttcm}{TTCM}{turbo-trellis coded modulation}
\newacronym[]{uep}{UEP}{unequal error protection}
\newacronym[\glsshortpluralkey=CENCs,\glslongpluralkey=convolutional encoders]{cenc}{CENC}{convolutional encoder}
\newacronym[]{mimo}{MIMO}{multiple-input multiple-output}
\newacronym[\glsshortpluralkey=SNRs,\glslongpluralkey=signal-to-noise ratios]{snr}{SNR}{signal-to-noise ratio}
\newacronym[\glsshortpluralkey=SINRs,\glslongpluralkey=the signal-to-interference-plus-noise ratios]{sinr}{SINR}{the signal-to-interference-plus-noise ratio}
\newacronym[]{msb}{MSB}{most-significative bit}
\newacronym[]{bcjr}{BCJR}{Bahl--Cocke--Jelinek--Raviv}
\newacronym[\glsshortpluralkey=SEDs,\glslongpluralkey=squared Euclidean distances]{sed}{SED}{squared Euclidean distance}
\newacronym[\glsshortpluralkey=EDs,\glslongpluralkey=Euclidean distances]{ed}{ED}{Euclidean distance}
\newacronym[\glsshortpluralkey=MEDs,\glslongpluralkey=minimum Euclidean distances]{med}{MED}{minimum Euclidean distance}
\newacronym[]{core}{CoRe}{constellation rearrangement}
\newacronym[]{msd}{MSD}{multistage decoding}
\newacronym[]{pdl}{PDL}{parallel decoding of the individual levels}
\newacronym[\glsshortpluralkey=GCs,\glslongpluralkey=Gray codes]{gc}{GC}{Gray code}
\newacronym[]{brgc}{BRGC}{binary-reflected Gray code}
\newacronym[]{nbc}{NBC}{natural binary code}
\newacronym[]{fbc}{FBC}{folded-binary code}
\newacronym[]{bsgc}{BSGC}{binary semi-Gray code}
\newacronym[]{msp}{MSP}{modified set-partitioning}
\newacronym[]{ssp}{SSP}{semi set-partitioning}
\newacronym[]{fhd}{FHD}{free Hamming distance}
\newacronym[]{mfhd}{MFHD}{maximum free Hamming distance}
\newacronym[]{ods}{ODS}{optimal distance spectrum}
\newacronym[]{iud}{i.u.d.}{independent and uniformly distributed}
\newacronym[]{ud}{u.d.}{uniformly distributed}
\newacronym[]{iid}{i.i.d.}{independent, identically distributed}
\newacronym[]{ami}{AMI}{accumulated mutual information}
\newacronym[]{bico}{BICO}{binary-input continuous-output}
\newacronym[]{gh}{GH}{Gauss--Hermite}
\newacronym[]{gum}{GUM}{Gaussian--uniform mixture}
\newacronym[\glsshortpluralkey=BSs,\glslongpluralkey=base-stations]{bs}{BS}{base-station}
\newacronym[\glsshortpluralkey=MSs,\glslongpluralkey=mobile-stations]{ms}{MS}{mobile-stations}
\newacronym[]{phy}{PHY}{physical layer} 
\newacronym[]{rlc}{RLC}{Radio-Link control} 
\newacronym[]{ran}{RAN}{Radio Access Network} 
\newacronym[]{llc}{LLC}{logical link control} 
\newacronym[]{tcp}{TCP}{transmission control protocol} 
\newacronym[]{mac}{MAC}{media access control} 
\newacronym[]{fft}{FFT}{fast Fourier transform} 
\newacronym[]{ft}{FT}{Fourrier transform}
\newacronym[]{cf}{CF}{characteristic function} 
\newacronym[]{mgf}{MGF}{moment generating function} 
\newacronym[]{ee}{EE}{energy efficiency} 
\newacronym[]{eb}{EB}{energy per bit}
\newacronym[]{kkt}{KKT}{Karush--Kuhn--Tucker} 
\newacronym[]{mcs}{MCS}{modulation/coding scheme} 
\newacronym[]{fec}{FEC}{forward error correction}
\newacronym[]{arq}{ARQ}{automatic repeat request}
\newacronym[]{harq}{HARQ}{hybrid ARQ}
\newacronym[]{tarq}{TARQ}{truncated HARQ}
\newacronym[]{ir}{IR}{incremental redundancy}
\newacronym[]{rpr}{RR}{repetition redundancy}
\newacronym[]{rrharq}{RR-HARQ}{repetition redundancy HARQ}
\newacronym[]{irharq}{IR-HARQ}{incremental redundancy HARQ}
\newacronym[]{ack}{ACK}{positive acknowledgment}
\newacronym[]{nack}{NACK}{negative acknowledgment}
\newacronym[]{hol}{HoL}{head of the line}
\newacronym[]{crc}{CRC}{cyclic redundancy check}
\newacronym[]{dp}{DP}{dynamic programming}
\newacronym[]{gp}{GP}{geometric programming}
\newacronym[]{per}{PER}{packet error rate}
\newacronym[]{ber}{BER}{bit error rate}
\newacronym[]{op}{OP}{outage probability}
\newacronym[]{spa}{SPA}{saddle-point approximation}
\newacronym[]{mrc}{MRC}{maximum ratio combining}
\newacronym[]{mdp}{MDP}{Markov decision process}
\newacronym[]{lp}{LP}{linear programming}
\newacronym[]{pomdp}{POMDP}{partially observable Markov decision process}
\newacronym[]{psimdp}{PSI-MDP}{partial state information Markov decision process}
\newacronym[]{scpp}{SCPP}{stochastic shortest path problem}
\newacronym[]{forw}{frwd}{forward}
\newacronym[]{feed}{fdbk}{feedback}
\newacronym[]{mm}{MM-HARQ}{multi-message HARQ}
\newacronym[]{xp}{XP-HARQ}{cross-packet HARQ}
\newacronym[]{ts}{TS}{time-sharing}
\newacronym[]{sc}{SC}{superposition coding}
\newacronym[]{sbrq}{SBRQ}{systematic backward retransmission}
\newacronym[]{brq}{BRQ}{backward retransmission}
\newacronym[]{lharq}{L-HARQ}{layer-coded HARQ}
\newacronym[]{anlharq}{AoN-HARQ}{all-or-none L-HARQ}
\newacronym[]{vlharq}{VL-HARQ}{variable-length HARQ}
\newacronym[]{pp}{PPP}{point process}
\newacronym[]{ppp}{PPP}{Poisson point process}
\newacronym[]{pgfl}{PGFL}{Poisson point process}
\newacronym[]{fide}{FIDE}{F\'ed\'eration Internationale des \'Echecs}
\newacronym[]{fifa}{FIFA}{F\'ed\'eration Internationale de Football Association}
\newacronym[]{sg}{SG}{stochastic gradient}
\newacronym[]{lms}{LMS}{least mean squares}
\newacronym[]{rls}{RLS}{recursive least squares}
\newacronym[]{vss}{VSS}{variable step-size}
\newtheorem{proposition}{Proposition}
\begin{document}

%------------------
%-- Paper title
\title{Understanding and Pushing the Limits of  the Elo Rating Algorithm}
\author{Leszek Szczecinski and Aymen Djebbi
\thanks{%
L.~Szczecinski and A. Djebbi are with INRS, Montreal, Canada. [e-mail: \{leszek,~aymen.djebbi\}@emt.inrs.ca].  The work was supported by NSERC, Canada.}%
}%

%-----------------

\maketitle
\thispagestyle{empty}

%%%%\doublespacing
\setstretch{1.6}

%%%%
\begin{abstract}
This work is concerned with the rating of players/teams in face-to-face games with three possible outcomes: loss, win, and draw. This is one of the fundamental problems in sport analytics, where the very simple and popular, non-trivial algorithm was proposed by Arpad Elo in late fifties to rate chess players. In this work we explain the mathematical model underlying the Elo algorithm and, in particular, we explain what is the implicit but not yet spelled out, assumption about the model of draws. We further extend the model to provide flexibility and remove the unrealistic implicit assumptions of the Elo algorithm. This yields the new rating algorithm, we call $\kappa$-Elo, which is equally simple as the Elo algorithm but provides a possibility to adjust to the frequency of draws. The discussion of the importance of the appropriate choice of the parameters is carried out and illustrated using results from English Premier League football seasons.
\end{abstract}

%%%%
\section{Introduction}\label{Sec:Intro}
Rating of players/teams is, arguably one of the most important issues in sport/ competition analytics.  In this work we are concerned with rating of the players/teams in the sports with one-on-one games yielding ternary results of win, loss and draw; such a situation appear in almost all team sports and many individual sports/competitions. 

Rating in sports consists in assigning a numerical value to a player/team using the results of the past games. While most of the sports' ratings use the points which are attributed to the game's winner, the rating algorithm which was developed in late fifties by Arpad Elo in the context of chess competition \citep{Elo08_Book}, and adopted later by \gls{fide}, challenged this view. 

Namely, the Elo algorithm changes the players' rating using not only the game outcome but also the ratings of the players before the game. The Elo algorithm is arguably one of the most popular, non-trivial rating algorithm and was used to analyze different sports, although mostly informally \citep[Chap.~5]{Langeville12_book}\citep{wikipedia_elo}; it is also used for rating in eSports \citep{Herbrich06}. Moreover, in 2018,  the Elo algorithm was adopted, under the name ``SUM'', by \gls{fifa} for the rating of the national football teams \citep{fifa_rating}. The Elo algorithm thus deserves a particular attention particularly because it is often presented without mathematical details behind its derivation, which may be quite confusing.

In this work we adopt the probabilistic modelling point of view, where the game outcomes are related to the rating by conditional probabilities. The advantage is that, to find the rating, we can use the conventional estimation strategies, such as  \gls{ml}; moreover, with the well defined model, once the ratings are found, they can be used for the purpose of prediction, which we understand as defining the distribution over the results of the game to come. This well-known mathematical formalism of rating in sport has been developed in psychometrics for rating the preferences in pairwise-comparison setup \citep{Thurston27}\citep{Bradley52} and the problem was deeply studied and extended in different directions \eg \citep{Cattelan12}\citep{Caron12}.

In this work we are particularly concerned with the mathematical modelling of draws (or ties). This issue has been addressed in psychometrics via two distinct approaches: in \citep{Rao67}, using thresholding of the unobserved (latent) variables and in \citep{Davidson70}--via an axiomatic approach. These two approaches have also been applied in sport rating, \eg \citep{Herbrich06}\citep{Joe90}; the former, however,  is used more often than the latter.

We note that the draws are not modelled in the Elo algorithm \citep{Elo08_Book}. In fact, and more generally, the outcomes are not explicitly modelled at all; rather, to derive the algorithm, the probabilistic model is combined with the strong intuition of the author; no formal optimality criteria is defined. Nevertheless, it was later observed that the Elo algorithm actually finds the approximate \gls{ml} ratings estimates in the binary-outcome (win-loss) games \citep{Kiraly17}. 

As for the draws, the Elo algorithm considers them by using the concept of a fractional score (of the game). However, since the underlying model is not specified, in our view there is a logical void: on the one hand, the Elo algorithm includes draws, on the other hand, there is no model allowing us to calculate the draw probability. The objective of this work is to fill this gap. 

The paper is organized as follows. We define the mathematical model of the problem in \secref{Sec:Model}. In \secref{Sec:rating.filtering} we show how the principle of \gls{ml} combined with the \gls{sg} yield the Elo algorithm in the binary-outcome games. We treat the issue of draws in \secref{Sec:Draws}; this is where the main contributions of the paper are found. Namely, we show and discuss the implicit model underlying the Elo algorithm; we also extend the model to increase its flexibility; finally we show how to define its parameters to take into account the known frequency of the draws. In \secref{Sec:Examples} we illustrate the analysis with numerical results and the final conclusions are drawn in \secref{Sec:Conclusions}.

%%%%%
\section{Rating: Problem definition}\label{Sec:Model}
We consider the problem of $M$ players (or teams), indexed by $m=1,\ld,M$, challenging each other in face-to-face games. At a time $n$ we observe the result/outcome $y_n$ of the game between the players defined by the pair $\bi_n=\set{i_{\tnr{H},n},i_{\tnr{A},n}}$. The index $i_{\tnr{H},n}$ refers to the ``home'' player, while $i_{\tnr{A},n}$ indicates  the ``away'' player. This distinction is often important in the team games where the so-called home-field advantage may play a role; in other competition such an effect may exist as well, like in chess, the player who starts the game may be considered a home player. We consider three possible game results: i)~the home player wins; denoted as $\set{i_{\tnr{H},n}\gtrdot  i_{\tnr{A},n}}$ in which case $\set{y_n=\mfH}$; ii)~the draw (or tie) $\set{y_n=\mfD}$, denoted also as $\set{i_{\tnr{H},n} \doteq i_{\tnr{A},n}}$; and finally, iii)~$\set{y_n=\mfA}$, which means that the ``away'' player wins which we  denote also as $\set{i_{\tnr{H},n} \lessdot  i_{\tnr{A},n}}$. 

For compactness of notation, useful in derivations, it is convenient to encode the categorical variable $y_n$ into numerical indicators defined over the set $\set{0,1}$
\begin{align}\label{omega.lambda.tau}
h_n&=\IND{y_n=\mfH},\quad  a_n=\IND{y_n=\mfA},\quad d_n=\IND{y_n=\mfD},
\end{align}
with $\IND{\cd}$ being  the indicator function: $\IND{A}=1$ if $A$ is true and $\IND{A}=0$, otherwise. The mutual exclusivity of the win/loss/draw events  guarantees $h_n+a_n+d_n=1$.

Having observed the outcomes of the games, $y_l, l=1,\ld,n$, we want to \emph{rate} the players, \ie assign a \emph{rating level}---a real number---$\theta_m$ to each of them. The rating level should represent the player's ability to win; for this reason it is also called \emph{strength} \citep{Glickman99} or \emph{skill} \citep{Herbrich06}\citep{Caron12}. The ability should be understood in the probabilistic sense: no player has a guarantee to win so the outcome $y_n$ is treated as a realization of a random variable $Y_n$. Thus, the levels $\theta_m, m=1,\ld, M$ should provide a reliable estimate of the distribution of $Y_n$ over the set $\set{\mfH,\mfA,\mfD}$. In other words, the formal rating becomes an expert system explaining the past-- and predicting the future results.
%%%%%%%
\subsection{Win-loss model}\label{Sec:rating.model}
It is instructive to consider first the case when the outcome of the game is binary, $y_n\in\set{\mfH, \mfA}$, \ie for the moment, we ignore the possibility of draws, $\mfD$, and we consider them separately in \secref{Sec:Draws}. In this case we are looking to establish the probabilistic model linking the result of the game and the rating levels of the involved players.  By far the most popular approach is based on the so-called linear model \citep[Ch.~1.3]{David63_Book}
\begin{align}\label{Pr.ij.PhiW.ov}
\PR{ i\gtrdot j |\theta_i,\theta_j} =  \PhiH(\theta_i-\theta_j),
\end{align}
where $\PhiH(v)$ is an increasing function which satisfies 
\begin{align}\label{}
\lim_{v\rightarrow-\infty}\PhiH(v)=0,\quad \lim_{v\rightarrow\infty}\PhiH(v)=1,
\end{align}
and thus we may set $\PhiH(v)=\Phi(v)$, where $\Phi(v)$ is  a conveniently chosen \gls{cdf}. By symmetry, $\PR{ i\gtrdot j }=\PR{ j\lessdot i }$, we obtain
\begin{align}\label{Pr.ij.PhiW}
\PhiH(v)=\Phi(v),\quad \PhiA(v) =  \Phi(-v)= 1-\Phi(v),
\end{align}
where the last relationship comes from the law of total probability, $\PR{i\gtrdot  j}+\PR{i\lessdot  j} =1$ (remember, we are dealing with binary-outcome games).% and impies also that the underlying \gls{pdf} must be even, $\Phi'(v)=\Phi'(-v)$. 

Indeed, \eqref{Pr.ij.PhiW.ov} corresponds to our intuition: the growing difference between rating levels $\theta_i-\theta_j$ should translate into increasing probability of user $i$ winning against the user $j$.

To emphasize that the entire model is defined by the \gls{cdf} $\Phi(v)$, which affects both $\PhiH(v)$ and $\PhiA(v)$ via \eqref{Pr.ij.PhiW}, we  keep the separate notation $\PhiH(v)$ and $\Phi(v)$ even if they are the same in the case we consider. 

A popular choice for $\Phi(v)$ is the logistic \gls{cdf} \citep[]{Bradley52}
\begin{align}\label{Phi.Logistic}
\Phi(v)=\frac{1}{1+10^{-v/\sigma}}=\frac{10^{0.5v/\sigma}}{10^{0.5v/\sigma}+10^{-0.5v/\sigma}},
\end{align}
where  $\sigma>0$ is a scale parameter.  

We note that the rating is arbitrary regarding 
\begin{itemize}
\item the origin---because any value $\theta_0$ can be added to all the levels $\theta_m$ without affecting the difference $v=\theta_i-\theta_j$ appearing as the argument of  $\Phi(\cd)$ in \eqref{Pr.ij.PhiW}, 
\item the scaling---because the levels $\theta_m$ obtained with the scale $\sigma$ can be transformed into levels $\theta'_m$ with a scale $\sigma'$ via multiplication: $\theta'_m=\theta_m\sigma' /\sigma$, and then the value of $\Phi(\theta_i-\theta_j)$ used with $\sigma$ is the same value as $\Phi(\theta'_i-\theta'_j)$ used with $\sigma'$;\footnote{The rating implemented by \gls{fifa} uses $\sigma=600$ \citep{fifa_rating}, while \gls{fide} uses $\sigma=400$} and 
\item the base of the exponent in \eqref{Phi.Logistic}; for example, $10^{-v/\sigma}=\e^{-v/\sigma'}$ with $\sigma'=\sigma \log_{10} \e$; therefore, changing from the base-$10$ to the base of the natural logarithm requires replacing $\sigma$ with $\sigma'$.
\end{itemize}
%%%%%%%%
\section{Rating via maximum likelihood estimation}\label{Sec:rating.filtering}

Using the results from \secref{Sec:rating.model},  the random variables, $Y_n$, and the rating levels are related through conditional probability 
\begin{align}
\label{P.Yk.W}
\PR{  Y_n = \mfH | \bx_n, \btheta} &= \PhiH( v_n )=\Phi( v_n),\\
\label{P.Yk.L}
\PR{  Y_n = \mfA | \bx_n, \btheta} &= \PhiA(v_n) =\Phi( - v_n ),\\
\label{linear.combine}
v_n&=\bx_n \T \btheta = \theta_{i_{\tnr{H},n}}-\theta_{i_{\tnr{A},n}},
\end{align}
where $\btheta=[\theta_1,\ld,\theta_M]\T$ is the vector which gathers all the rating levels, $(\cd)\T$ denotes transpose,  $v_n$ is thus a result of linear combiner  $\bx_n$ applied to $\btheta$, and $\bx_n$ is the game-scheduling vector,  \ie
\begin{align}\label{}
\bx_n = [ 0, \ld, 0, \underbrace{1}_{i_{\tnr{H},n}\tr{-th pos.}},  0, \ld, 0, \underbrace{-1}_{i_{\tnr{A},n}\tr{-th pos.}}, 0, \ld, 0 ]\T.
\end{align}
We prefer the notation using the scheduling vector as it liberates us from somewhat cumbersome repetition of the indices $i_{\tnr{H},n}$ and $i_{\tnr{A},n}$ as in \eqref{linear.combine}.

Our goal now, is to find the levels $\btheta$ at time $n$ using the game outcomes $\set{y_l}_{l=1}^n$ and the scheduling vectors $\set{\bx_l}_{l=1}^n$. This is fundamentally a parameter estimation problem (model fitting) and we solve it using the \gls{ml} principle. The \gls{ml} estimate of $\btheta$ at time $n$ is obtained via optimization
\begin{align}\label{theta.ML}
\hat{\btheta}_n=\argmin_{\btheta} J_n(\btheta)
\end{align}
where %$\hat{\btheta}_n=[\hat{\theta}_{n,1},\ld, \hat{\theta}_{n,M}]\T$, and
\begin{align}
\label{Pr.y.theta}
J_n(\btheta)&=- \log \PR{\set{Y_l}_{l=1}^n=\set{y_l}_{l=1}^n|\btheta,\set{\bx_l}_{l=1}^n }. 
\end{align}
Further, assuming that conditioned on  the levels $\btheta$, the outcomes $Y_l$ are mutually independent, \ie $ \PR{\set{Y_l}_{l=1}^n=\set{y_l}_{l=1}^n|\btheta,\set{\bx_l}_{l=1}^n } =\prod_{l=1}^n\PR{Y_n=y_n|\btheta,\bx_n } $, we obtain
\begin{align}
J_n(\btheta)&= \sum_{l=1}^n  L_l(\btheta)\\
\label{F.k}
L_l(\btheta)&=-\log \PR{ Y_l=y_l | \btheta}\\
\label{F.k.2}
&=-h_l\log\PhiH(\bx_l\T\btheta)-a_l\log\PhiA(\bx_l\T\btheta),
\end{align}
where we applied the model \eqref{P.Yk.W}-\eqref{P.Yk.L}.

%%%%%
\subsection{Stochastic gradient and Elo algorithm}
The minimization in \eqref{theta.ML} can be done via steepest descent which would result in the following operations 
\begin{align}\label{Step.descent}
\hat{\btheta}_n \leftarrow \hat{\btheta}_n - \mu \nabla_{\btheta}J_n(\hat{\btheta}_n)
\end{align}
iterated (hence the symbol ``$\leftarrow$'') till convergence for a given $n$; the gradient is calculated as
\begin{align}\label{gradient.J.k}
 \nabla_{\btheta}J_n(\btheta) =  \sum_{l=1}^n \nabla_{\btheta}L_l(\btheta),
\end{align}
and the step, $\mu$, should be adequately chosen to guarantee the convergence. Moreover, since $J_n(\btheta)$ is convex,\footnote{The convexity comes from the fact that $-\log\PhiH(v)$ is convex in $v$ (easy to demonstrate by hand) and thus $\log\PhiH(\bx\T\btheta)$, being a concatenation of  a convex and  linear functions is also convex \citep[Appendix~A]{Tsukida11}.} the minimum is global.\footnote{While the minimum is global, it is not unique due to the ambiguity of the origin $\theta_0$ we mentioned at the end of \secref{Sec:rating.model}.}

From \eqref{F.k.2} we obtain
\begin{align}\label{gradient}
\nabla_{\btheta}L_l(\btheta) 
&= - h_l \bx_l \psi( \bx_l\T\btheta ) +a_l \bx_l \psi( -\bx_l\T\btheta )\\
\label{gradient.2}
&=-\bx_l e_l(v_l)
\end{align}
where, directly from \eqref{Phi.Logistic} we have
\begin{align}\label{psi.x}
\psi(v)=\frac{\dd }{\dd v}\log\Phi(v)=\frac{\Phi'(v)}{\Phi(v)}=\frac{1}{\sigma'}\Phi(-v),
\end{align}
where $\sigma'=\sigma \log_{10} \e$, and, using $\Phi(-v)=1-\Phi(v)$ we have
\begin{align}
\label{e.k}
e_l(v_l)
&=h_l\psi\big( v_l \big) +(h_l-1) \psi\big( -v_l \big)
\\
\label{e.k.2}
&=\frac{1}{\sigma'}[h_l -\Phi(v_l)].
\end{align}

The solution  obtained in \eqref{Step.descent} is based on the model \eqref{P.Yk.W}-\eqref{P.Yk.L} which requires $\btheta$ to remain constant throughout the time $l=1,\ld, n$. Since, in practice, the levels of the players may vary in time (the abilities evolve due to training, age, coaching strategies, fatigue, etc.), it is necessary to track  $\btheta$. 

To this end, arguably the simplest strategy relies on the \acrfull{sg} which differs from the steepest descent in the following elements: i)~at time $n$ only one iteration of the steepest descent is executed,  ii)~the gradient is calculated solely for the current observation term $L_n(\hat{\btheta}_n)$, and iii)~the available estimate $\hat{\btheta}_n$ is used as the starting point for the update
\begin{align}\label{stoch.gradient}
\hat{\btheta}_{n+1} &=  \hat{\btheta}_{n} -\mu\nabla_{\btheta} L_n(\btheta) =\hat{\btheta}_{n} + \mu \bx_n e_n(v_n)\\
\label{stoch.gradient.2}
&=\hat{\btheta}_{n} + \mu \bx_n [h_n-\Phi(v_n)]\\
\label{stoch.gradient.3}
&=\hat{\btheta}_{n} - \mu \bx_n [a_n-\Phi(-v_n)],
\end{align}
where $\mu$ is the adaptation step; with abuse of notation the fraction $\frac{1}{\sigma'}$ from \eqref{e.k.2} is absorbed by $\mu$ in \eqref{stoch.gradient.2}-\eqref{stoch.gradient.3}.

%The \gls{sg} algorithm is quite popular and has been used in many areas; for example, for parameter estimation \citep{Toulis14}, in ``engineering'' applications where it is known under the name of \gls{lms} \citep[Chap.~5]{Haykin02_Book}, and for training of neural networks where it comes under the name of back-propagation \citep{Rumelhart86}.

In the rating context, $\bx_n$ has only two non-zero terms, and therefore only the level of the players $i_{\tnr{H},n}$ and $i_{\tnr{A},n}$ will be modified. By inspection, the update \eqref{stoch.gradient.2}-\eqref{stoch.gradient.3} may be written as a single equation for any player $i\in\set{i_{\tnr{H},n},i_{\tnr{A},n}}$ 
\begin{align}\label{rating.SG}
\hat{\theta}_{n+1, i} &=\hat{\theta}_{n, i} + K\big[s_i-\Phi( \Delta_i )\big]
\end{align}
where $\Delta_i=\hat{\theta}_{n,i}-\hat{\theta}_{n,j}$ and $j$ is the index of the player opposing the player $i$,  \ie $j\neq i, j\in\set{i_{\tnr{H},n},i_{\tnr{A},n}}$; $s_i=\IND{i\gtrdot j}$ indicates if the player $i$ won the game. Since the variables $s_i$ and $\Delta_i$ are intermediary, on purpose we do not index them with $n$. 

We also replaced $\mu$ with $K$ so that \eqref{rating.SG} has the form of the Elo algorithm as usually presented in the literature \citep{Elo08_Book}\citep[Ch.~5]{Langeville12_book}. Thus, the Elo algorithm implements the \gls{sg} to obtain the \gls{ml} estimate of the levels $\btheta$ under the model \eqref{Pr.ij.PhiW}. This has been noted before, \eg in \citep{Kiraly17}. 

We also note that, in the description of the Elo algorithm  \citep{Elo08_Book}, $s_i$ is defined as a numerical ``score'' attributed to the game outcome $\mfH$ or $\mfA$. In a sense, it is a legacy of rating methods which attribute numerical value to the game result. On the other hand, in the modelling perspective we adopted, attribution of numerical values to the categorical variables $\mfH$ and $\mfA$ is not required.

%%%%%%%%%%%%%%%%%%%%%%%%%%%%%%%%%%
\section{Draws}\label{Sec:Draws}

We want to address now the issue of draws (ties) in the game outcome. We ignored it for clarity of development, but draws are important results of the game and must affect the rating, especially in sports when they occur frequently, such as international football,  chess and many others sports and competitions \citep[Ch.~11]{Langeville12_book}. Some approaches in the literature go around this problem by ignoring the draws, other count them as partial wins/losses with fractional score $s_i=\frac{1}{2}$ \citep[Ch.~11]{Langeville12_book}\citep{Glickman15}. Such heuristics, while potentially useful, do not show explicitly how to predict the results of the games from the rating levels.

Thus, the preferred approach is to model the draws explicitly; we  must, therefore, augment our model to include the conditional probability of draws
\begin{align}\label{Pr.ij.PhiT}
\PR{ i\doteq j |\theta_i,\theta_j}=  \PhiD(\theta_i-\theta_j),
\end{align}
where by axiomatic requirement $\PhiD(v)$ should be decreasing with the absolute value of its argument, and be maximized for $v=0$. The justification is that large absolute difference in levels increases the probability of win or loss, while the rating levels proximity, $\theta_i\approx \theta_j$, should increase the probability of a draw.

By the law of total probability we require now
\begin{align}\label{sum.WTL}
\PhiH(v)+\PhiA(v) +\PhiD(v)=1,
\end{align}
which obviously implies that considering the draws, the functions $\PhiH(v)$ and $\PhiA(v)$ also must change with respect to those used when analyzing the binary (win/loss) game results.

%%%%
\subsection{Explaining draws in the Elo algorithm}\label{Sec:draws.axiomatic}

The Elo algorithm also considers draws by setting $s_i=\frac{1}{2}$ in \eqref{rating.SG} \citep[Ch.~1.6]{Elo08_Book} \citep[Ch.~5]{Langeville12_book}. However, the function $\PhiD(v)$ is undefined which is quite perplexing: the draws are accounted for but the model, which would allow us to calculate their probability from the parameters $\btheta$, is lacking. Moreover, the description of algorithm  \eqref{rating.SG} still indicates that $\Phi(\Delta_i)$ is the ``expected score'' which cannot be calculated without explicit definition of the probability of draw. Despite this logical gap, the algorithm is being widely used and is considered reliable.

Our objective here, is thus to ``reverse-engineer'' the Elo algorithm and explain what probabilistic model is compatible with the operation of the algorithm. This will bridge the gap providing formal basis to interpret the results.

\begin{proposition}[The Elo algorithm with draws] The Elo algorithm \eqref{rating.SG} which assigns the score value $s_i=1$ to a win, $s_i=0$ to a loss and $s_i=\frac{1}{2}$ to a draw, implements \gls{sg} to estimate the rating levels $\btheta$ using the  \gls{ml} principle for model defined by the following conditional probabilities 
\begin{align}
\label{PhiW.WLT}
\PhiH(v)&=\Phi^2(v),\quad
%\label{PhiL.WLT}
\PhiA(v)=\Phi^2(-v)\\
\label{PhiT.WLT}
\PhiD(v)&=2\Phi(v)\Phi(-v).
\end{align}

\end{proposition}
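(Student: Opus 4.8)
The plan is to run the \gls{sg} derivation of \secref{Sec:rating.filtering} in reverse: take \eqref{PhiW.WLT}--\eqref{PhiT.WLT} as the postulated model, form the per-game negative log-likelihood, differentiate it, and verify that the resulting \gls{sg} step coincides with \eqref{rating.SG} with the fractional score $s_i\in\set{0,\tfrac12,1}$. As a preliminary sanity check I would first confirm that \eqref{PhiW.WLT}--\eqref{PhiT.WLT} defines an admissible model: since $\Phi(v)+\Phi(-v)=1$ we get $\PhiH(v)+\PhiA(v)+\PhiD(v)=\big(\Phi(v)+\Phi(-v)\big)^2=1$, so \eqref{sum.WTL} holds; and $\PhiD(v)=2\Phi(v)\big(1-\Phi(v)\big)$ is maximized when $\Phi(v)=\tfrac12$, i.e. at $v=0$, and decays as $|v|$ grows, exactly the axiomatic behaviour required of $\PhiD$ after \eqref{Pr.ij.PhiT}. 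I would also stress that the Proposition only claims this to be \emph{a} model compatible with the algorithm, not the unique one.

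The substantive step is the gradient computation. Mimicking \eqref{F.k.2}, the contribution of the $n$-th game to the negative log-likelihood is
\begin{align*}
L_n(\btheta)=-h_n\log\PhiH(v_n)-a_n\log\PhiA(v_n)-d_n\log\PhiD(v_n),\qquad v_n=\bx_n\T\btheta ,
\end{align*}
and inserting \eqref{PhiW.WLT}--\eqref{PhiT.WLT} turns this into $-2h_n\log\Phi(v_n)-2a_n\log\Phi(-v_n)-d_n\big[\log\Phi(v_n)+\log\Phi(-v_n)+\log 2\big]$. Using \eqref{psi.x}, namely $\tfrac{\dd}{\dd v}\log\Phi(v)=\psi(v)=\tfrac{1}{\sigma'}\Phi(-v)$ and hence $\tfrac{\dd}{\dd v}\log\Phi(-v)=-\psi(-v)=-\tfrac{1}{\sigma'}\Phi(v)$, together with $\nabla_{\btheta}L_n=\bx_n\,\tfrac{\dd}{\dd v_n}L_n$ and $\Phi(-v_n)=1-\Phi(v_n)$, the scalar derivative collapses (after collecting like terms and using $h_n+a_n+d_n=1$) to
\begin{align*}
\frac{\dd}{\dd v_n}L_n=\frac{1}{\sigma'}\Big[-2h_n-d_n+2\Phi(v_n)\Big]=-\frac{2}{\sigma'}\Big[\big(h_n+\tfrac12 d_n\big)-\Phi(v_n)\Big].
\end{align*}
This is the heart of the argument: the two cross-term contributions (from $\log\Phi(v_n)$ and from $\log\Phi(-v_n)$ inside $\log\PhiD$) add to the win/loss terms so that the bracket becomes precisely $h_n+\tfrac12 d_n$, which is $1$ on a home win, $0$ on a home loss and $\tfrac12$ on a draw.

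To finish, I would substitute this into the \gls{sg} update \eqref{stoch.gradient} and absorb the constant $\tfrac{2}{\sigma'}$ into the step size, renamed $K$ as in \eqref{rating.SG} (the same absorption of $\tfrac{1}{\sigma'}$ into $\mu$ already performed below \eqref{stoch.gradient.3}). Since $\bx_n$ has a $+1$ in position $i_{\tnr{H},n}$ and a $-1$ in position $i_{\tnr{A},n}$, only those two players are updated: for $i=i_{\tnr{H},n}$ one gets $\hat\theta_{n+1,i}=\hat\theta_{n,i}+K\big[(h_n+\tfrac12 d_n)-\Phi(\Delta_i)\big]$ with $\Delta_i=v_n$, while for $i=i_{\tnr{A},n}$ the sign flip combined with $\Phi(-v_n)=1-\Phi(v_n)$ and $1-h_n=a_n+d_n$ yields $\hat\theta_{n+1,i}=\hat\theta_{n,i}+K\big[(a_n+\tfrac12 d_n)-\Phi(\Delta_i)\big]$ with $\Delta_i=-v_n$. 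In both cases the bracket equals $s_i-\Phi(\Delta_i)$ with $s_i=\IND{i\gtrdot j}+\tfrac12\IND{i\doteq j}$, i.e. $s_i=1,\tfrac12,0$ for win, draw, loss, which is exactly \eqref{rating.SG}. I do not anticipate a genuine obstacle: once the model is posited the computation is routine, and the only care needed is bookkeeping the several factors of two (the squared $\Phi$'s, the $\tfrac{1}{\sigma'}$ from \eqref{psi.x}, and the home/away sign) and checking that they are all consistently subsumed into the single constant $K$.
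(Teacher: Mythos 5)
Your proposal is correct and follows essentially the same route as the paper's proof: verify \eqref{sum.WTL} by squaring $\Phi(v)+\Phi(-v)=1$, form the per-game negative log-likelihood under \eqref{PhiW.WLT}--\eqref{PhiT.WLT}, differentiate using \eqref{psi.x}, and recognize the resulting \gls{sg} step as \eqref{rating.SG} with $s_i=h_n+\tfrac12 d_n$ (resp.\ $a_n+\tfrac12 d_n$) after absorbing the factor $\tfrac{2}{\sigma'}$ into $K$. Your explicit bookkeeping of the constant $\log 2$ term (which the paper silently drops, harmlessly, since it vanishes under differentiation) is the only difference, and it is immaterial.
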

\begin{proof}
We start by squaring the equation of the total probability law for the binary-outcome game,  $\Phi(v)+\Phi(-v)=1$, to obtain
\begin{align}\label{total.law.WLT}
\Phi^2(v)+\Phi^2(-v)+2\Phi(v)\Phi(-v)=1
\end{align}
and thus, using the assignment \eqref{PhiW.WLT}-\eqref{PhiT.WLT}, we satisfy \eqref{sum.WTL}. This may appear arbitrary but we have to recall that the whole model for the binary outcome is built on assumptions which reflect our idea about the loss/win probabilities and indeed, the draw probability function $\PhiD(v)$ has the behaviour we expected: it has a maximum for $v=0$ and decreases with growing $|v|$.

Now, each function in the model,  $\PhiH(v)$, $\PhiA(v)$, and $\PhiD(v)$, is a non-trivial transformation of $\Phi(v)$.

Using \eqref{PhiW.WLT}-\eqref{PhiT.WLT}, we rewrite \eqref{F.k} as
\begin{align}
\label{F.k.WTL}
L_l(\btheta)&=-\log \PR{ Y_l=y_l | \btheta}\\
\label{F.k.WTL.2}
&=-h_l\log\PhiH(v_l)-a_l\log\PhiA(v_l)-d_l\log\PhiD(v_l)\\
&=-2h_l\log\Phi(v_l)-2a_l\log\Phi(-v_l)-d_l\Big[\log\Phi(v_l) +\log\Phi(-v_l)\Big]
\end{align}
so the gradient is calculated as in \eqref{gradient}
\begin{align}\label{gradient.WLT}
\nabla_{\btheta}L_l(\btheta) 
&= - 2 \tilde{h}_l\bx_l \psi( v_l ) +2\tilde{a}_l\bx_l \psi( -v_l )\\
\label{gradient.WLT.2}
&=-2\bx_l \tilde{e}_l(\btheta)
\end{align}
where $\tilde{h}_l=h_l+d_l/2$, $\tilde{a}_l=a_l+d_l/2=1-\tilde{h}_l$, and 
\begin{align}
\label{e.k.WLT}
\tilde{e}_l(\btheta) &=\tilde{h}_l -\Phi(v_l).
\end{align}

We thus recover the same equations as in the binary-result game, splitting the draw indicator, $d_l$, equally between the indicators of the home and away wins; we can reuse them directly in \eqref{stoch.gradient.2}-\eqref{stoch.gradient.2}
\begin{align}\label{}
\label{stoch.gradient.2.HAD}
\hat{\btheta}_{n+1}&=\hat{\btheta}_{n} + \mu \bx_n [\tilde{h}_n-\Phi(v_n)]\\
\label{stoch.gradient.3.HAD}
&=\hat{\btheta}_{n} - \mu \bx_n [\tilde{a}_n-\Phi(-v_n)],
\end{align}
which yields the same update as the Elo algorithm \eqref{rating.SG} 
\begin{align}\label{rating.SG.WLT}
\hat{\theta}_{n+1, i} &= \hat{\theta}_{n, i} + K \big[s_i-\Phi( \Delta_i )\big],
\end{align}
with the new definition of the score $s_i=\tilde{h}_n$ (for the home player) and $s_i=\tilde{a}_n$ (for the away player), and where for compatibility of equations, the update step $K$ absorbed the multiplication by $2$ (the only difference between \eqref{gradient.WLT.2} and \eqref{gradient.2}). 
\end{proof}

The following observations are in order:
\begin{enumerate}
%%%%
\item We unveiled the implicit model behind the Elo algorithm thus, our findings do not affect  the operation of the algorithm but rather clarify how to interpret its results. Namely, given the estimate of the levels $\hat{\btheta}_n$ the probability of the game outcomes should be estimated as
\begin{align}\label{Pr.win.hat.theta}
\PR{i\gtrdot j|\hat{\theta}_i,\hat{\theta}_j} &=\Phi^2 ( \hat{\theta}_i-\hat{\theta}_j )\\
\label{Pr.loss.hat.theta}
\PR{i\lessdot j|\hat{\theta}_i,\hat{\theta}_j}&=\Phi^2 ( \hat{\theta}_j-\hat{\theta}_i )\\
\label{Pr.draw.hat.theta}
\PR{i\doteq j|\hat{\theta}_i,\hat{\theta}_j}& =2\Phi ( \hat{\theta}_i-\hat{\theta}_j )\Phi( \hat{\theta}_j-\hat{\theta}_i ).
\end{align}
%%%%%
\item We emphasize that $s_i$ is the indicator of the result but  using \eqref{PhiW.WLT}-\eqref{PhiT.WLT} we can again calculate its expected value for $i=i_{\tnr{H},n}$
\begin{align}\label{}
\Ex_{Y_l|\hat{\theta}_{l,i},\hat{\theta}_{l,j}}[ s_i(Y_l) ] 
&=\PR{i\gtrdot j|\hat{\theta}_i,\hat{\theta}_j} +\frac{1}{2}\PR{i\doteq j|\hat{\theta}_i,\hat{\theta}_j}\\
&=\big[\Phi ( \Delta_i )\big]^2+\Phi ( \Delta_i )\Phi( -\Delta_i )\\
&=\Phi ( \Delta_i )\big[\Phi ( \Delta_i ) +\Phi ( -\Delta_i )\big]=\Phi ( \Delta_i );
\end{align}
the same can be straightforwardly done for $i=i_{\tnr{A},n}$.

Thus, indeed, the function $\Phi(\Delta_i)$ in the Elo update \eqref{rating.SG} has the meaning of the expected score. It has not been spelled out mathematically up to now---most likely---because the draws has been only implicitly considered. Nevertheless, with formidable intuition, the description of the Elo algorithm defines correctly the terms without making reference to the underlying probabilistic model. 

We note, again, that the notion of expected score is not necessary in the development of the \gls{sg} algorithm and the fact that the score takes the fractional value $s_i=\frac{1}{2}$ is a result of the particular form of the conditional probability \eqref{PhiT.WLT} and our decision to make $K$ absorb the multiplication by $2$, see \eqref{gradient.WLT.2}.
\end{enumerate}

While the clarification we made regarding the meaning of the expected score is useful, the first observation above is the most important for the explicit interpretation of the results of the algorithm. Recall that, in the win-loss game, the function $\Phi ( \Delta_i )$ has the meaning of the probability of winning the game, see \eqref{Pr.ij.PhiW}. However, in the win-draw-loss model, such interpretation is incorrect because the probability of winning the game is given by \eqref{Pr.win.hat.theta} which we just derived. As we will see in the numerical examples, using the latter, however, provides poor results. 

%We note that this conceptual difficulty has been observed before. For example, \citep{Glickman99} \citep{Lasek13} realized the necessity to model the draws and used the approach which may be summarized as
%\begin{align}\label{}
%\PhiH(v)&\propto \Phi ( v )\\
%\PhiA(v)&\propto \Phi ( -v )\\
%\PhiT(v)&\propto \sqrt{\Phi ( v )\Phi ( -v )};
%\end{align}
%we used here the symbol $\propto$ to indicate that these functions should have been normalized to satisfy \eqref{sum.WTL}. However, while we assume that the normalization have been applied to calculate the probability in the prediction stage, it has not been done from the very start.
%

This surprising confusion persisted through time because the model we  have shown in \eqref{PhiW.WLT}-\eqref{PhiT.WLT} is merely implicit in the Elo algorithm and the explicit derivation of the algorithm \citep[Chap.~8]{Elo08_Book} did not consider the draws in the formal probabilistic framework. Other works, \eg \citep{Glickman99} \citep{Lasek13}, observed this conceptual difficulty before. In particular, \citep[Sec.~2]{Glickman99} used $\PhiT(v)= \sqrt{\Phi ( v )\Phi ( -v )}$ but kept the legacy of the win-loss model, \ie $\PhiH(v)= \Phi ( v )$ and $\PhiA(v)=\Phi(-v)$, which leads to approximate solutions because \eqref{sum.WTL} is violated.

The lesson learned is that, despite the apparent simplicity of the Elo algorithm, we should resist the temptation to tweak its parameters. While using the fractional score value $s_i=\frac{1}{2}$ for the draw is now explained, we cannot guarantee that modifying $s_i$  in arbitrary manner will correspond to a particular probabilistic model. Therefore, rather than tweaking the \gls{sg}/Elo algorithm \eqref{rating.SG}, the modification should start with the probabilistic model itself.

%%%%%%
\subsection{Generalization of the Elo algorithm}\label{Sec:Extended.Elo}

Having unveiled the implicit modeling of draws underlying the Elo algorithm we immediately face a new problem. Namely, considering the draws, we have three events (and thus two independent probabilities to estimate) but the Elo algorithm has no additional degree of freedom to take this reality into account. For example, using \eqref{Pr.win.hat.theta}-\eqref{Pr.draw.hat.theta} the results of the game between the players with equal rating levels $\hat{\theta}_i=\hat{\theta}_j$, will always be predicated as $\PR{i\gtrdot j|\hat{\theta}_i,\hat{\theta}_j} =0.25$ and $\PR{i\doteq j|\hat{\theta}_i,\hat{\theta}_j} =0.5$. The Elo algorithm does that implicitly, but there is no real reason to stick to such a rigid solution which may produce an  inadequate fit to the observed data, and a more general approach is necessary.

One of the  workarounds proposed by  \citep{Rao67} and used later, \eg \citep{Fahrmeir94}\citep{Herbrich06}\citep{Kiraly17}, modifies the model using a threshold value $v_0\geq 0$
\begin{align}\label{Phi.WLT.thresholds}
\PhiH(v)&= \Phi(v- v_0),\quad \PhiA(v)= \Phi( -v - v_0), \quad \PhiD(v) = \Phi(v+v_0) -\Phi(v- v_0).
\end{align}

While \eqref{Phi.WLT.thresholds} is definitely useful and solves formally the problem which is more general than the case of binary outcome game, we do not treat it as a generalization of the Elo algorithm itself, because there is no parameter $v_0$ which transforms \eqref{Phi.WLT.thresholds} into \eqref{PhiW.WLT}-\eqref{PhiT.WLT} (which, as we demonstrated, is the model behind the Elo algorithm).

Here we propose to use the model of \citep{Davidson70} which can be defined as
\begin{align}
\label{PhiW.Davidson}
\PhiH(v)&= \Phi_{\kappa}(v) =   \frac{10^{0.5v/\sigma}}{10^{0.5v/\sigma}+10^{-0.5v/\sigma} +\kappa}\\
\label{PhiL.Davidson}
\PhiA(v)&=\Phi_{\kappa}(-v) =  \frac{10^{-0.5v/\sigma}}{10^{0.5v/\sigma}+10^{-0.5v/\sigma} +\kappa}\\
\label{PhiT.Davidson}
\PhiD(v)&=\kappa\sqrt{\PhiH(v)\PhiA(v)}=\frac{\kappa}{10^{0.5v/\sigma}+10^{-0.5v/\sigma} +\kappa},
\end{align}
where $\kappa\geq 0$ is a freely set draw parameter.

We hasten to say that the model  \eqref{PhiW.Davidson}-\eqref{PhiT.Davidson} is not necessarily better in the sense of fitting to the data than \eqref{Phi.WLT.thresholds}. Our motivation to adopt  \eqref{PhiW.Davidson}-\eqref{PhiT.Davidson} is the fact that these equations generalize previous models. Namely, for $\kappa=0$ we obtain the win-loss model behind the Elo algorithm shown in \eqref{rating.SG}, while using $\kappa=2$ yields
\begin{align}\label{}
\label{PhiW.Davidson.n=2}
\PhiH(v)&= \frac{10^{0.5v/\sigma}}{\Big(10^{0.25v/\sigma}+10^{-0.25v/\sigma}\Big)^2}=\Phi^2(v/2)
\end{align}
which, up to the scale factor $\sigma$, corresponds to the implicit win-draw-loss model behind the Elo algorithm we have shown in \eqref{PhiW.WLT}-\eqref{PhiT.WLT}. 

In other words, the implicit model for the Elo algorithm is based on the explicit modeling  of draws  proposed by \citep{Davidson70} if we set a particular value of the draw parameter ($\kappa=2$).

%%%%%%
\subsubsection{Adaptation}\label{Sec:Adaptation.Extended.Elo}

We quickly note that the function $-\log \Phi_{\kappa}(v)$ is convex so the gradient-based adaptation will converge under adequate choice of the step $\mu$.

To derive the adaptation algorithm we recalculate \eqref{F.k.WTL} 
\begin{align}\label{}
L_l(\btheta)&=-\log \PR{ Y_l=y_l | \btheta}\\
&=-h_l\log\PhiH(v_l)-a_l\log\PhiA(v_l)-d_l\log\PhiD(v_l)\\
&=-\tilde{h}_l\log\PhiH(v_l)-\tilde{a}_l\log\PhiH(-v_l)
\end{align}
and the gradient is given by
\begin{align}\label{}
\nabla_{\theta}L_l(\btheta)
&=-e_l(v_l)\bx_l
\end{align}
where
\begin{align}\label{}
e_l(v_l)&=
\tilde{h}_l\psi_\kappa(v_l)+(\tilde{h}_l-1)\psi_\kappa(-v_l)\\
\psi_{\kappa}(v)&=\frac{\Phi'_{\kappa}(v)}{\Phi_{\kappa}(v)}
=\frac{1}{\sigma'}\frac{10^{-0.5v/\sigma}+\frac{1}{2}\kappa}{10^{0.5v/\sigma}+10^{-0.5v/\sigma} +\kappa}
=\frac{1}{\sigma'} F_{\kappa}(-v),
\end{align}
where, as before $\sigma'=\sigma\log_{ 10}\e$, and we define
\begin{align}\label{F.kappa.definition}
F_\kappa( v ) =\frac{10^{v/2}+\frac{1}{2} \kappa}{10^{v/2}+10^{-v/2} +\kappa}=1-F_\kappa( -v ),
\end{align}
and thus
\begin{align}\label{}
e_l(v_l)
&=
\frac{1}{\sigma'}\Big(\tilde{h}_l F_\kappa(-v_l) + (\tilde{h}_l-1)F _\kappa(v_l)\Big)\\
\label{e.l.GElo}
&=
\frac{1}{\sigma'}\Big(\tilde{h}_l-F_\kappa(v_l)\Big).
\end{align}

Using \eqref{e.l.GElo} in \eqref{stoch.gradient} yields the same equations as in \eqref{rating.SG.WLT} except that $\Phi(v)$ must be replaced with $F_\kappa(v)$ and the division by $\sigma'$ should be absorbed by the adaptation step. This yields a new $\kappa$-Elo rating algorithm
\begin{align}\label{rating.SG.WLT.Extended}
\hat{\theta}_{n+1, i} &= \hat{\theta}_{n, i} + K\big[s_i-F_\kappa( \Delta_i )\big],
\end{align}
where as before i)~$\Delta_i= \hat{\theta}_i - \hat{\theta}_j$ ($j$ being the index of the player opposing the player $i$), ii)~as in the Elo algorithm, $K$ is maximum increase/decrease step, and iii)~$s_i\in\set{0,\frac{1}{2},1}$ indicates the outcome of the game, \ie the score.

%with the draw parameter $\kappa\geq 0$ allowing us to adjust the model to the proportions of wins/losses/draws in the game.

The new $\kappa$-Elo algorithm is equally simple as the Elo algorithm, yet provides us with the flexibility to model the relationship between the draws and the wins via the draw parameter $\kappa\geq0$. We recall that, in fact, the Elo algorithm is a particular version of $\kappa$-Elo for $\kappa=2$.
We provide numerical examples in \secref{Sec:Examples} to illustrate its properties. 

%Using \eqref{PhiW.Davidson}-\eqref{PhiT.Davidson} in \eqref{F.kappa.definition} we obtain
%\begin{align}\label{F.kappa.meaning}
%F_\kappa( \Delta_i ) = \PR{i\gtrdot j|\hat{\theta}_i,\hat{\theta}_j} + \frac{1}{2}\PR{i\doteq j|\hat{\theta}_i,\hat{\theta}_j};
%\end{align}
%and thus, $F_\kappa(\Delta_i)$  is the mathematical expectation of the  score. 

%%%%%%%%%%%%%%%%%%%%%%%%%
\subsubsection{$\kappa$ in $\kappa$-Elo algorithm: insights and pitfalls}\label{Sec:setting.kappa}
Can we say something about the draw parameter, $\kappa$, without implementing and running the $\kappa$-Elo algorithm  defined by \eqref{rating.SG.WLT.Extended}? The answer is yes, if we suppose that the fit we obtain is (almost) perfect, \ie the average empirical probabilities averaged over a large time window
\begin{align}\label{}
\ov{p}_\mfH&=\frac{1}{N}\sum_{l=1}^N\IND{y_l=\mfH}, \quad \ov{p}_\mfA=\frac{1}{N}\sum_{l=1}^N\IND{y_l=\mfA},\quad \ov{p}_\mfD=\frac{1}{N}\sum_{l=1}^N\IND{y_l=\mfD},
\end{align}
can be deduced from the functions \eqref{PhiW.Davidson}-\eqref{PhiT.Davidson}  using the estimated rating levels $\hat{\btheta}$.\footnote{Such statistics may be obtained from previous seasons. While they do not change drastically through  seasons and may be treated as a prior, in the case of on-line rating, they may also be estimated from the recent past. However, we do not follow this idea further in this work.} If this is the case they should stay in the relationship prescribed by the model \eqref{PhiT.Davidson}, \ie $\ov{p}_\mfD \approx \kappa\sqrt{\ov{p}_\mfH \ov{p}_\mfA}$. 

Denoting the difference between the frequency of home and away wins as  $\ov{\delta}=\ov{p}_\mfH- \ov{p}_\mfA$, and from the law of total (empirical) probability we obtain $\ov{p}_\mfH=\frac{1}{2}(1-\ov{p}_\mfD+\ov{\delta})$ and $\ov{p}_\mfA=\frac{1}{2}(1-\ov{p}_\mfD-\ov{\delta})$ from which
\begin{align}\label{}
\ov{p}_\mfD \approx \frac{\kappa}{2}\sqrt{(1-\ov{p}_\mfD)^2-\ov{\delta}^2}
\end{align}
and thus, for the relatively small values of home/away imbalance, \eg $\ov{\delta}< 0.2$ we can ignore the term $\ov{\delta}^2$ which allows us simply say what is implicit assumption about $\ov{p}_\mfD$ for arbitrary $\kappa$
\begin{align}\label{kappa.2.pt}
\ov{p}_\mfD &\approx \frac{\kappa}{2+\kappa}.
\end{align} 
Thus using $\kappa=2$ (as done implicitly in the current rating of \gls{fide} and \gls{fifa}), suggests that the $\ov{p}_\mfD\approx 0.5$. Since this is not the case in none of the competitions where these rating are used, we can expect that, when implementing the new rating algorithm with a more appropriate value of $\kappa$, \gls{fide} and \gls{fifa} will improve the fit to the results in the sense of better estimation of the probabilities of win, loss, and draw.

We can also estimate the suitable value of $\kappa$ as
\begin{align}\label{pt.2.kappa}
\ov{\kappa} &\approx \frac{2\ov{p}_\mfD}{1-\ov{p}_\mfD}.
\end{align}

For example,  using $\ov{p}_\mfD \approx 0.25$ (which was the average frequency of draws in English Premier Ligue football games over ten seasons, see \secref{Sec:Examples}) we would find  $\ov{\kappa}\approx 0.7$. 

Is this value acceptable? 

Before answering this question, we have to point to a particular problem that can arise in the modelling of the draws. Namely, the current formulations known in the literature (the threshold-based \eqref{Phi.WLT.thresholds} or the one we used \eqref{PhiW.Davidson}-\eqref{PhiT.Davidson}), do not explicitly constrain the relationship between the predicted \emph{values} of probabilities. Of course, we always keep the relationship $\PhiA(v)=\PhiH(-v)$. Thus, considering the case  $v_l=\hat{\theta}_i-\hat{\theta}_j=\epsilon$ (where $\epsilon>0$ is a small rating difference), we have $\PhiH(\epsilon)\ge\PhiA(\epsilon)$ and our intuition follows: it is more probable that a stronger home player wins than he looses. 

On the other hand, it is not clear what should be said about the probability of the draw in such a case. Should we expect the probability of draw to be larger than the probability of home/away win? For example, is it acceptable to obtain the values $\PhiH(\epsilon)=0.42$, $\PhiA(\epsilon)=0.38$ and $\PhiD(\epsilon)=0.20$? Nothing prevents such results in the model we use (and, to our knowledge, in other models used before) and the interpretation is counterintuitive: the stronger home player is more likely to loose than to draw.

Therefore, we might want to remove such results from the solution space: for equal-rating players we force the probability of the draw to be larger than the probability of home/away wins, we thus have to use $\kappa$ which satisfies 
\begin{align}\label{D.gt.W}
\PhiD(0)&>\PhiW(0)\\
\label{k.gt.1}
\kappa&\geq1,
\end{align}
where the last inequality follows from \eqref{PhiW.Davidson}-\eqref{PhiT.Davidson}. This is an important restriction and forces us to model the draws occurring with (a large) frequency $\ov{p}_\mfD\ge 0.33$, see \eqref{kappa.2.pt}. While it seems unsound to use the mismatched model, we don't know its impact on the prediction capability and yet, we have to remember, that the current version of the Elo algorithms uses $\kappa=2$. We have no clear answer to this question and will seek more insight in the numerical examples.

%Thus, while there are some potential restrictions in the definition of the parameter $\kappa$, even with $\kappa=1$ we 
%Finally, always maintaining our disclaimer about lack of guarantees for our model being superior in any sense to the threshold approach \eqref{Phi.WLT.thresholds}, the capacity of the algorithm $\kappa$-Elo to adjust to the frequency of draws without operating on the data, gives it a significant edge, at least from the simplicity point of view. 

%%%%%%%%%%%%%%%%%%%%%%%%%
\section{Numerical Examples}\label{Sec:Examples}

We illustrate the operation of the algorithms using the results from the England Premier League football games available at \citep{football-data}. In this context, there are $M=20$ teams playing against each other in one home- and one away-games. We consider one season at the time, thus $n=1,\ld, N$, index the games in the chronological order, and $N=M(M-1)=380$. 

Football (and other) games are known to produce the so-called home-field advantage, where the home wins $\set{y_n=\mfH}$ are more frequent than the away wins $\set{y_n=\mfA}$. In the rating context, this is modelled by artificially increasing the level of the home player, which corresponds, de facto, to left-shifting of the conditional probability functions
\begin{align}\label{}
\PhiH^\tnr{hfa}(v)=\PhiH(v+\eta\sigma), \quad \PhiA^\tnr{hfa}(v)=\PhiA(v+\eta\sigma), \quad \PhiD^\tnr{hfa}(v)=\PhiD(v+\eta\sigma),
\end{align}
where home-field advantage parameter $\eta\ge 0$ should be adequately set; its value is independent of the scale thanks to multiplication by $\sigma$.\footnote{We note that this version of the equation is slightly different from \citep[Eq.~2.4]{Davidson77}; with our formulation, the relationship \eqref{pt.2.kappa} is not affected by the home-field advantage parameter $\eta$.}

As in FIFA rating algorithm, \citep{fifa_rating}, we set $\sigma=600$; the levels are initialized at $\theta_{0,m}=0$; as we said before these values are arbitrary.  In what follows we always use the normalization $K=\tilde{K}\sigma$ which removes the dependence on the scale: for a given $\tilde{K}$ the prediction results will be exactly the same even if we change the value of $\sigma$.

An example of the estimated ratings $\theta_{n,m}$ for a group of teams is shown in \figref{Fig:rating} to illustrate the fact that quite a large portion of time in the beginning of the season is dedicated to the convergences of the algorithm; this is the  ``learning'' period. Of course, using larger step $\tilde{K}$ we can accelerate the learning at the cost of increased variability of the rating. These well-known issues are related to the operation of \gls{sg} but solving them is out of the scope of this work. We mention them mostly because, to evaluate the performance of the algorithms, we decide to use the second half of the season, where we assume the algorithms converged and the rating levels follow the performance of the teams. This is somewhat arbitrary of course, but our goal here is to show the influence of  the draw-parameter and not to solve the entire problem of convergence/tracking in \gls{sg}/Elo algorithms.

%%%%%%%%%%%%%%%%%%%%%%%%%%%%%%%%%%%%%%%%%%%%%%%%%%
\begin{figure}
\centering
\psfrag{xlabel}{\footnotesize  $n$}
\psfrag{2015}{}
\psfrag{ylabel}{\footnotesize $\hat{\theta}_{m,n}$}
\includegraphics[width=0.8\linewidth]{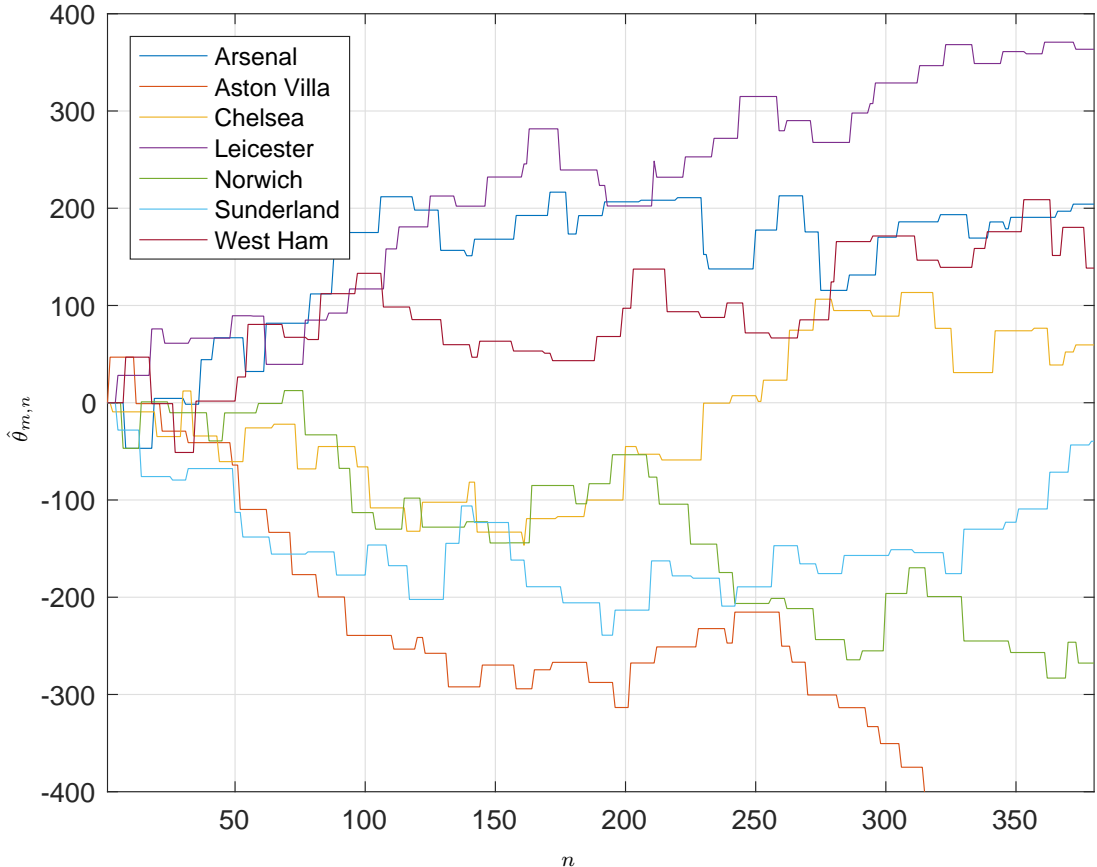}
\caption{Evolution of the rating levels $\hat{\theta}_{m,n}$ for selected English Premier League teams in the season 2015; $N=380$, $\sigma=600$, $\tilde{K}=0.125$, $\eta=0.3$, $\kappa=0.7$. We assume that, the first half of the season absorbs the learning phase, and the tracking of the teams' levels in the  second half is free of the initialization effect.}
\label{Fig:rating}
\end{figure}
%%%%%%%%%%%%%%%%%%%%%%%%%%%%%%%%%%%%%%%%%%%%%%%%%%

For concision, the estimated probability of the game result  $\set{\mfH,\mfA, \mfD}$ calculated before the game at the time $l$ using the rating levels $\hat{\btheta}_{l-1}$  obtained at the time $l-1$, is denoted as
\begin{align}\label{}
\hat{p}_{l,\mfH}=\Phi_{\mfH}(\bx\T_l\hat{\btheta}_{l-1}), \quad \hat{p}_{l,\mfA}=\Phi_{\mfA}(\bx\T_l\hat{\btheta}_{l-1}), \quad \hat{p}_{l,\mfD}=\Phi_{\mfD}(\bx\T_l\hat{\btheta}_{l-1}).
\end{align}

We show the (negative) logarithmic score \citep{Gelman2014} averaged over the second-half of the season 
\begin{align}\label{log.score}
\ov{\tnr{LS}}= \frac{2}{N}\sum_{l=N/2+1}^{N} \tnr{LS}_{l}.
\end{align}
where
\begin{align}\label{}
\tnr{LS}_l &=  -( h_l \log \hat{p}_{l,\mfH} +a_l \log \hat{p}_{l,\mfA} +d_l \log \hat{p}_{l,\mfD}).
\end{align}

We still have to define the prediction of the draw in the conventional Elo algorithm: we cannot set $\hat{p}_{l,\mfD})=\PhiD(v)\equiv 0$, of course, because it would result in infinite logarithmic score. We thus follow the heuristics of \citep{Lasek13} which may be summarized as follows: the conventional Elo algorithm is used to find the rating levels (\ie $\kappa=2$ is used in $\kappa$-Elo), but the prediction is based on $\PhiH(v)$, $\PhiA(v)$, and $\PhiD(v)$ with a different value of the draw-parameter $\kappa=\check{\kappa}$. This may be seen as a model mismatch between estimation and prediction. We follow \citep{Lasek13} and apply $\check{\kappa}=1$; this correspond to $\ov{p}_\mfD\approx 0.33$ and also is the minimum value of $\kappa$ which guarantees \eqref{D.gt.W}.

We show in \figref{Fig:LogScore} the logarithmic score  $\ov{\tnr{LS}}$ for different values of the draw parameter $\kappa$, and normalized step $\tilde{K}$. We compare our predictions with those based on the probabilities inferred from the odds of the betting site Bets365 available, together with the game results, at \citep{football-data}.\footnote{This is done as in \citep{Kiraly17}: the published decimal odds for the three events, $o_\mfH$, $o_\mfA$, and $o_\mfD$, are used to infer the probabilities, $\tilde{p}_\mfH\propto 1/o_\mfH$, $\tilde{p}_\mfA\propto 1/o_\mfA$, and $\tilde{p}_\mfD\propto 1/o_\mfD$; these are next normalized to make them sum to one (required as the betting odds are not ``fair'' and include the bookie's overhead, the so-called vigorish).} These are constant reference lines in \figref{Fig:LogScore} as they, of course, do  not vary with the parameters we adjust.

We observe that introducing the draw parameter $\kappa$ we improved the logarithmic score. On the other hand, using $\kappa$-Elo algorithm with $\kappa=2$ yields particularly poor results if we explicit the model (and thus use $\kappa=2$ for the prediction); a much better solution is to use a mismatched model and apply $\check{\kappa}=1$; the results obtained are, in general very close to those obtained using $\kappa$-Elo algorithm especially when used with $\kappa=1$. In the season 2013-2014, where frequency of draws was low, using the corresponding value $\kappa=\ov{\kappa}$ provided notable improvement comparing to large $\kappa\in\set{1,2}$. 
%The adaptation step should be sought in the values close to $\tilde{K}\approx XXXXX$.  A peculiar behaviour of $\ov{\tnr{PA}}$ is observed: it is growing with $S$ if $\kappa$ is large. We can explain this qualitatively: using large $\kappa$ we try to force the fit to the model with  high percentage of draws, even if their actual frequency (empirical probability) is low. This fit is forced by each observation of a draw. Actually, in such a case it would be preferable to remove the draws altogether from the adaptation procedure. This is what happens using large steps $K$: we force the \gls{sg} algorithm to ``forget'' the past and focus on the most recent observation, which removes draws from such a local fit. On the other hand, this does not change the model in which the draws are now explicitly included.

%%%%%%%%%%%%%%%%%%%%%%%%%%%%%%%%%%%%%%%%%%%%%%%%%%
\begin{figure}
\centering
%\psfrag{Phi}{$\Phi(v)$}
\psfrag{score}{\footnotesize$\ov{\tnr{LS}}$}
\psfrag{0.40}{\tiny $\kappa=0.4$}
\psfrag{0.70}{\tiny $\kappa=0.7$}
\psfrag{1.00}{\tiny $\kappa=1$}
\psfrag{2.00}{\tiny $\kappa=2$}
\psfrag{Eloxxxxxx}{\tiny Elo+$\check{\kappa}$}
\psfrag{Bet365}{\tiny Bet365}
\psfrag{2017}[cb]{\footnotesize $2017-2018$}
\psfrag{2013}[cb]{\footnotesize $2013-2014$}
\begin{tabular}{cc}
\psfrag{xlabel}[tt]{\footnotesize $\eta$}
\includegraphics[width=0.45\linewidth]{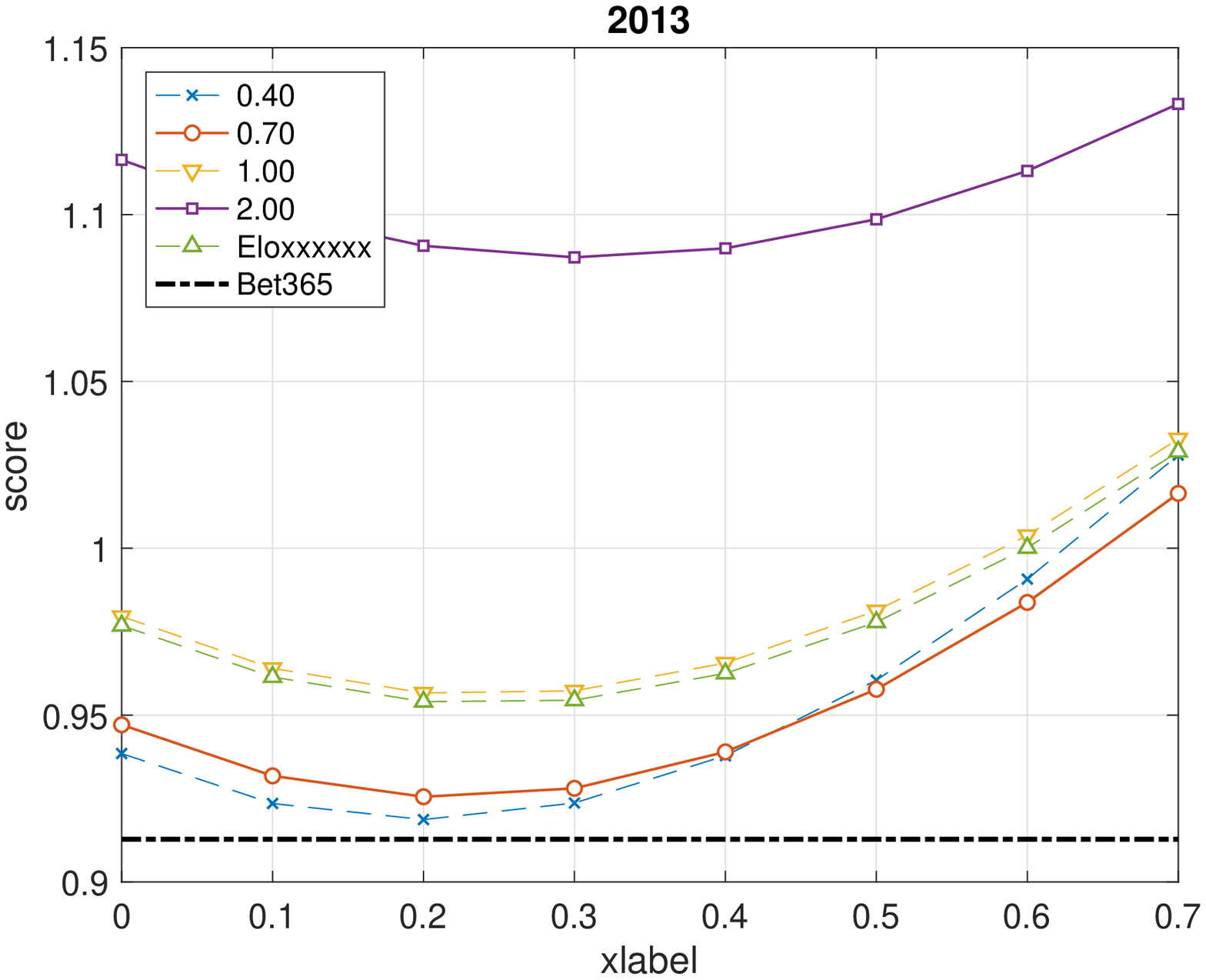} &
\psfrag{xlabel}[tt]{\footnotesize $\eta$}
\includegraphics[width=0.45\linewidth]{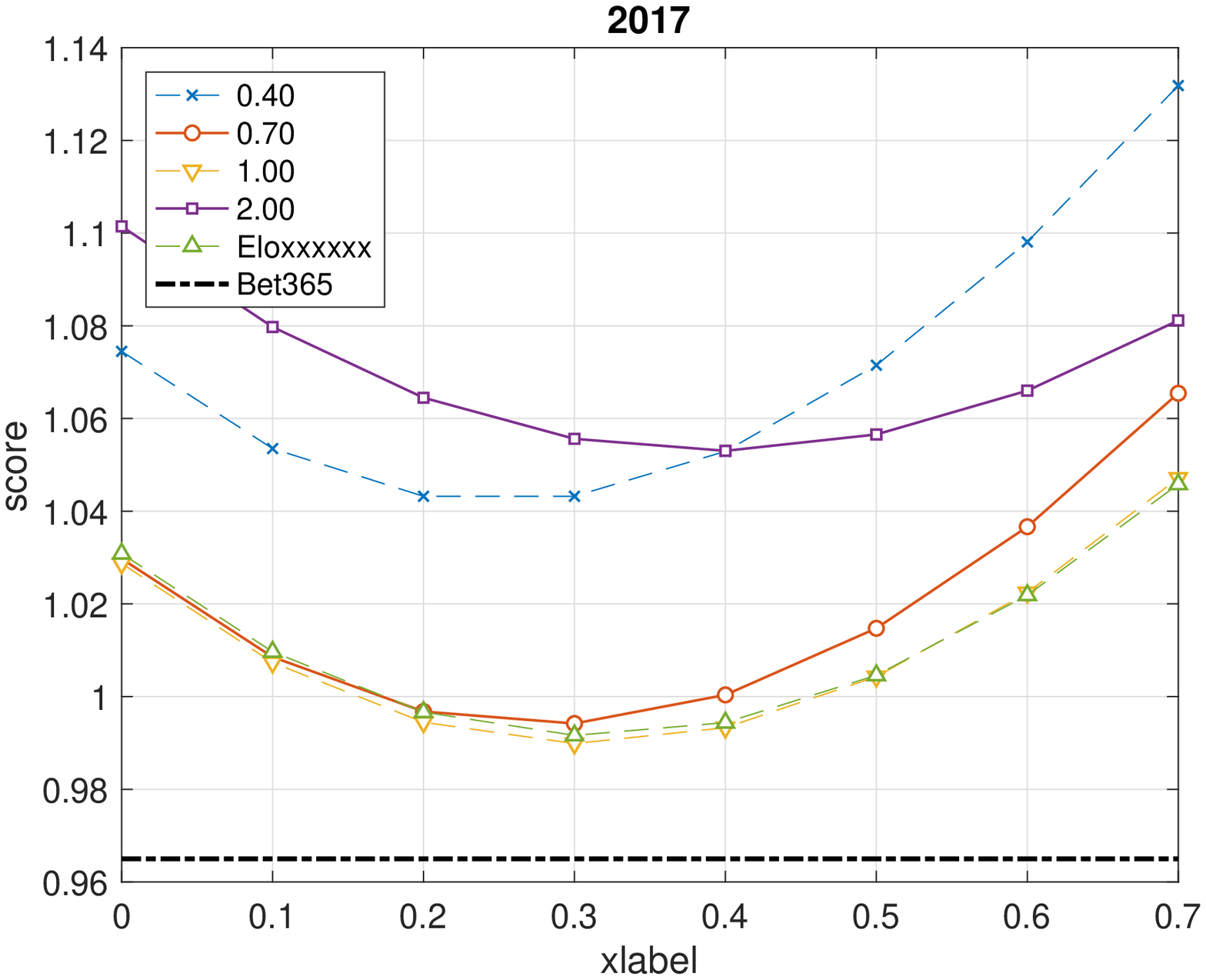} \\
\small a) &
\small b)\\
\end{tabular}
\caption{Logarithmic score, \eqref{log.score}, in second half of two seasons of English Premier League with $\sigma=600$, $\tilde{K}=0.125$, different values of $\kappa$ indicated in the legend, and varying the home-advantage parameter, $\eta$;  a)~season 2013-2014, where  $\ov{p}_\mfD=0.17$ and thus using \eqref{pt.2.kappa}, we obtain $\ov{\kappa}\approx 0.40$; b)~season 2017-2018, where  $\ov{p}_\mfD=0.26$, and thus $\ov{\kappa}\approx 0.7$. The results ``Elo+$\check{\kappa}$'' are obtained from the conventional Elo algorithm but $\check{\kappa}=1$ is used in the prediction. The results ``Bet365'' are based on the probabilities inferred from the betting odds offered by the site Bet365.}
\label{Fig:LogScore}
\end{figure}
%%%%%%%%%%%%%%%%%%%%%%%%%%%%%%%%%%%%%%%%%%%%%%%%%%

Finally, we show the comparison across various seasons in \tabref{Tab:Score.Seasons} where, beside the score $\ov{\tnr{LS}}$ we also show the pseudo-credibility interval $(\ov{\tnr{LS}}_\tnr{low},\ov{\tnr{LS}}_\tnr{high})$; this is the the minimum-length interval in which 95\% of the data was found.\footnote{We find it more informative than derivation of credibility intervals using unknown statistics.} We observe that using $\kappa=1$ does not incur a large penalty when compared to $\kappa=0.7$ even if the latter matches closely the observed frequency of draws. The differences may be observed only for seasons where the low frequency of draws implies very small $\ov{\kappa}$, \eg in 2013-2014 and 2018-2019. 

On the other hand, the length of the credibility intervals is slightly smaller for $\kappa=1$, indicating a better prediction ``stability'' across time. Similar average results may be obtained using the Elo algorithm with $\check{\kappa}=1$ which produces also slightly larger credibility intervals.

The results obtained with this rather limited set of data stay in line with our previous theoretical discussion, indicating at the same time that no dramatic change in performance should be expected by using $\kappa$-Elo. Nevertheless, an improvement can be obtained by using the conservative value of $\kappa=1$. This recommendation is motivated by the discussion in \secref{Sec:setting.kappa} and comes at no implementation cost.

\begin{table}
\centering
\scalebox{.8}{
\begin{tabular}{c|c||c|c|c|c}
Season & $\ov{\kappa}$ & Bet365 & $\kappa$-Elo, $\kappa=0.7$  & $\kappa$-Elo, $\kappa=1$ & Elo+$\check{\kappa}$ \\
\hline
2009-2010 &   0.71  &  0.91 $\in$ (0.16,1.68)  &  0.93 $\in$ (0.19,1.64)  & 0.93 $\in$ (0.26,1.68) & 0.93 $\in$ (0.20,1.62) \\ 
2010-2011 &   0.73  &  0.97 $\in$ (0.23,1.64)  &  1.01 $\in$ (0.29,1.69)  & 1.01 $\in$ (0.35,1.66) & 1.01 $\in$ (0.32,1.70) \\ 
2011-2012 &   0.59  &  0.99 $\in$ (0.16,1.87)  &  0.98 $\in$ (0.16,1.70)  & 1.00 $\in$ (0.22,1.73) & 1.00 $\in$ (0.17,1.72) \\ 
2012-2013 &   0.73  &  0.95 $\in$ (0.24,1.66)  &  1.01 $\in$ (0.22,1.82)  & 1.01 $\in$ (0.26,1.90) & 1.00 $\in$ (0.22,1.92) \\ 
2013-2014 &   0.42  &  0.91 $\in$ (0.14,1.98)  &  0.93 $\in$ (0.17,1.86)  & 0.96 $\in$ (0.21,1.82) & 0.95 $\in$ (0.20,1.92) \\ 
2014-2015 &   0.55  &  0.96 $\in$ (0.21,1.66)  &  1.00 $\in$ (0.22,1.88)  & 1.02 $\in$ (0.27,1.94) & 1.03 $\in$ (0.23,2.00) \\ 
2015-2016 &   0.77  &  1.00 $\in$ (0.27,1.73)  &  1.02 $\in$ (0.22,1.77)  & 1.01 $\in$ (0.27,1.78) & 1.01 $\in$ (0.24,1.86) \\ 
2016-2017 &   0.57  &  0.91 $\in$ (0.15,1.91)  &  0.93 $\in$ (0.19,1.92)  & 0.94 $\in$ (0.19,1.78) & 0.94 $\in$ (0.15,1.82) \\ 
2017-2018 &   0.75  &  0.97 $\in$ (0.14,1.91)  &  0.99 $\in$ (0.18,1.78)  & 0.99 $\in$ (0.23,1.78) & 0.99 $\in$ (0.19,1.86) \\ 
2018-2019 &   0.42  &  0.91 $\in$ (0.17,1.91)  &  0.93 $\in$ (0.17,1.80)  & 0.96 $\in$ (0.21,1.87) & 0.96 $\in$ (0.17,1.96) \\ 

\end{tabular}
}
\caption{Logarithmic score $\ov{\tnr{LS}}$, \eqref{log.score}, in ten seasons of English Premier League; $\sigma=600$,  $\tilde{K}=0.125$, $\eta=0.3$. The results ``Elo+$\check{\kappa}$'' are obtained from the conventional Elo algorithm but $\check{\kappa}=1$ is used in the prediction. The results ``Bet365'' are based on the probabilities inferred from the betting odds offered by the site Bet365.}
\label{Tab:Score.Seasons}
\end{table}

\section{Conclusions}\label{Sec:Conclusions}
In this paper we were mainly concerned with explaining the rationale and mathematical foundation behind the Elo algorithm. The whole discussion may be summarized as follows:
\begin{itemize}
\item We explained that, in the binary-outcome games (win-loss), the Elo algorithm is an instance of the well-known stochastic gradient algorithm applied to solve the  \gls{ml} estimation of the rating levels. This observation already appeared in the literature, \eg \citep{Kiraly17} so it was made for completeness but also to lay ground for further discussion.% We also clarified in this context, the meaning of terms related to the Elo algorithm (such as score, and expected score). 
\item We have shown the implicit model behind the algorithm in the case of the games with draws. Although the algorithm has been used for decades in this type of  games, the model of the draws has not been shown, impeding, de facto, the formal prediction of their probability. We thus filled this logical gap.
\item We proposed a natural generalization of the Elo algorithm obtained from the well-known model proposed by \citep{Davidson70}; the resulting algorithm, which we call $\kappa$-Elo, has the same simplicity as the original Elo algorithm, yet provides additional parameter to adjust to the frequency of draws. By extension, we revealed that the implicit model behind the Elo algorithm assumes that the frequency of draws is equal to 50\%.
\item We briefly discussed the constraints on the relationship between the values of draw and loss probabilities for the players with similar ratings; more precisely, we postulate that, in such a case, the draw probability  should be larger than the probability of win/loss. While the  discussion on such constraints has been absent from the literature, we feel it deserves further analysis to construct suitable models and algorithms for rating. Applying these constraints to the $\kappa$-Elo algorithm yields $\kappa\ge 1$. This is clearly a limitation which will produce a mismatch between the results and the model if the frequency of draws is less than 33\%.
\item To illustrate the main concepts we have shown numerical examples based on the results of the international football games in English Premier League.
\item Finally, we conclude that, while in the past,  the Elo algorithms has satisfied to a large extent the demand for simple rating algorithms, it is still possible to provide better, more flexible, and yet simple solutions. In particular the $\kappa$-Elo is better in a sense of taking the frequency of draws into account without compromising the complexity of implementation.
\end{itemize}

%\bibliography{IEEEabrv,references_rank,references_all}
%\bibliographystyle{DeGruyter}

\end{document}